\numberwithin{equation}{section}
\def\RR{\mathbb{R}}
\def\HH{\mathcal{H}}
\theoremstyle{plain}
\theoremstyle{plain}
\newtheorem{theorem}{Theorem} [section]
\newtheorem{corollary}[theorem]{Corollary}
\newtheorem{lemma}[theorem]{Lemma}
\newtheorem{proposition}[theorem]{Proposition}
\theoremstyle{definition}
\theoremstyle{remark}
\newtheorem{remark}[theorem]{Remark}
\numberwithin{theorem}{section}
\numberwithin{equation}{section}
\numberwithin{figure}{section}
\def\CC{\mathbb{C}}
\renewcommand{\Im}{\mathop{\rm Im}}
\begin{document}
\title[]{On the stability of the polygonal isoperimetric inequality}
%\email{eindrei@math.utexas.edu}
\author[E. Indrei and L. Nurbekyan]{E. Indrei and L. Nurbekyan}

\def\signei{\bigskip\begin{center} {\sc Emanuel Indrei\par\vspace{3mm}Center for Nonlinear Analysis\\  
Carnegie Mellon University\\
Pittsburgh, PA 15213, USA\\
email:} {\tt eindrei@msri.org }
\end{center}}

\def\signln{\bigskip\begin{center} {\sc Levon Nurbekyan \par\vspace{3mm}
Center for Mathematical Analysis,\\
Geometry, and Dynamical Systems\\
Departamento de Matem\'atica\\
Instituto Superior T\'ecnico\\
Lisboa 1049-001, Portugal\\
email:} {\tt lnurbek@math.ist.utl.pt}
\end{center}}

%\address{Department of Mathematics, The University of Texas, 1 University Station C1200, Austin, TX 78712}
%\keywords{ Non linear elliptic and parabolic equations, a priori estimates, maximum principle.
%\\
%\indent 2000 {\it Mathematics Subject Classification:} 35J15, 35J60, 35J62, 35J92, 35J93, 35J96, 35K10, 35K59, 35K92, 35K93, 35K96}

\makeatletter
\def\blfootnote{\xdef\@thefnmark{}\@footnotetext}
\makeatother

\blfootnote{E. Indrei acknowledges support from the Australian Research Council, US NSF Grant DMS-0932078 administered by the Mathematical Sciences Research Institute in Berkeley, CA, and US NSF PIRE Grant OISE-0967140 administered by the Center for Nonlinear Analysis at Carnegie Mellon University. L. Nurbekyan acknowledges support from the department of mathematics at the University of Texas at Austin and the Center for Mathematical Analysis, Geometry, and Dynamical Systems at Instituto Superior T\'ecnico.}

\date{}

\maketitle

\begin{abstract}
We obtain a sharp lower bound on the isoperimetric deficit of a general polygon in terms of the variance of its side lengths, the variance of its radii, and its deviation from being convex. Our technique involves a functional minimization problem on a suitably constructed compact manifold and is based on the spectral theory for circulant matrices. 
\end{abstract}

\section{Introduction}

The stability problem for functional and geometric inequalities consists of identifying a suitable quantity which measures the deviation of a given set or function from a minimizer and serves as a lower bound on the deficit in the inequality. For instance, the classical isoperimetric inequality states that if $E \subset \mathbb{R}^n$ is a Borel set of finite Lebesgue measure $|\cdot|$, then $$\mathcal{P}(B)\le \mathcal{P}(E),$$ where $B$ is the ball with $|B|=|E|$ and $\mathcal{P}$ denotes the (distributional) perimeter. Moreover, equality holds if and only if $E$ is a ball. Recently, it was shown in \cite{FMP} that 
\begin{equation} \label{iso}
\alpha^2(E)\lesssim (\mathcal{P}(E)/\mathcal{P}(B))-1,
\end{equation}
where $$\alpha(E):=\min \Bigg \{ \frac{|E \Delta (x+B)|}{|E|}: x \in \RR^n \Bigg\}.$$ The right-hand side of \eqref{iso} is known in the literature as the isoperimetric deficit and measures how far a given set is from having minimal perimeter whereas the left-hand side is a measure of the asymmetry of the set (i.e. its ``closeness" to a ball). This result was obtained through symmetrization techniques and settled a conjecture of R.R. Hall \cite{Hall} (the exponent $2$ is sharp in any dimension).   

In \cite{FiMP}, the authors developed a method based on optimal transport theory to establish an analogous estimate for the anisotropic isoperimetric inequality. Mass transfer techniques were also employed in proving a quantitative version of the relative isoperimetric inequality inside convex cones \cite{FI}, and a stability inequality for the Gaussian isoperimetric inequality was established in \cite{Gauss} via symmetrization techniques. Moreover, there has been a lot of recent research activity directed towards proving quantitative versions of several other fundemental inequalities in analysis such as the Sobolev \cite{Sob}, log-Sobolev \cite{logsob, ls2, logsob2}, and Brunn-Minkowski \cite{B, B2} inequalities.  

In this paper, we establish a sharp stability result for the polygonal isoperimetric inequality by  introducing a method based on circulant matrix theory. It is well known that the convex regular polygon uniquely minimizes the perimeter among all polygons subject to an area constraint. In other words, if $L_*$ denotes the perimeter of the convex regular $n$-gon with area $F$, then $L_* \le L(P) $ for any $n$-gon $P$ with area $F$ and equality holds  if and only if $P$ is convex and regular. Since $L_*^2=4n \tan \frac{\pi}{n}F$, an equivalent formulation is that for any polygon $P$, $$4n \tan \frac{\pi}{n}F(P)\le L^2(P),$$ with equality if and only if $P$ is convex and regular.

Stability results for the polygonal isoperimetric inequality have been investigated in the literature by several authors. For instance, a quantitative hexagonal isoperimetric inequality appeared in Hales' proof of the celebrated Honeycomb conjecture \cite[Theorem 4]{Hales}. Moreover, Zhang \cite[Theorem 3.1]{Z} used differential inequalities involving Schur functions to obtain the following Bonnesen-type \footnote{Quantitative isoperimetric inequalities are known in the literature as Bonnesen-type inequalities.} inequality for cyclic polygons,  
$$(L(P)-L_*)^2 \le \delta(P),$$ 
where $\delta(P):=L^2(P)-4n \tan \frac{\pi}{n}F(P)$ is the polygonal isoperimetric deficit (see also \cite{Z2,Z3}). A version of this inequality also holds for general $n$-gons by scaling $L_*$ appropriately. However, if $\delta(P)=0$, then one would still need to rely on additional arguments to conclude that $P$ is convex and regular. On the other hand, Fisher, Ruoff and Shilleto \cite[Theorem 4.4]{shilleto'85} introduced a notion of orthogonal polygons and proved a general stability inequality from which it follows that if $P$ is an equilateral $n$-gon, then
\begin{equation}\label{eq: shilleto}
\sigma_r^2(P) \lesssim \delta(P),
\end{equation}
where $\sigma_r^2(P)$ is the variance of the radii of $P$ (i.e. the distances between the vertices and the barycenter). This estimate implies that if $P$ is equilateral and $\delta(P)=0$, then $P$ is cyclic, and in particular, regular. However, this by itself is not sufficient to conclude that $P$ is the minimizer since there exist regular star-shaped polygons (e.g. the pentagram); moreover, the assumption that $P$ is equilateral is restrictive. 
%However, the authors prove an inequality valid for all $n$-gons, but it is not expressed in terms of the perimeter \eqref{eq: shiletto1}. Nevertheless, this inequality will be useful for   

In the same way that $\sigma_r^2(P)$ measures the deviation of $P$ from being cyclic, the variance of the side lengths of $P$, denoted by $\sigma_s^2(P)$, measures how far $P$ is from being equilateral. With this in mind, we define the \textit{variation} of a polygon as $$v(P):=\sigma_s ^2(P) + \sigma_r^2(P).$$  As noted above, the variation by itself is not enough to identify minimizers of the polygonal isoperimetric inequality since it may fail to detect convexity; in order to account for this, we utilize a generalization of the Erd{\H o}s-Nagy theorem which states that a polygon may be convexified in a finite number of ``flips" while keeping the perimeter invariant. To illustrate the concept of a flip, consider the convex hull of a simple $n$-gon $P$. If there are pockets (i.e. a maximal connected region exterior to the polygon and interior to the convex hull), reflect one pocket across its line of support to obtain a new simple $n$-gon with the same perimeter but greater area. Then the process is repeated and it turns out that after finitely many flips, the end result is a convex polygon. This theorem is well-known although several incorrect proofs have appeared in the literature; we refer the reader to \cite[Table 1]{ENN} for a list. The generalization of this result to non-simple (i.e. self-crossing) polygons was carried out by several authors but requires a sequence of well-chosen flips to avoid infinite flipping cycles, see e.g. \cite[\S 4.1]{ENN}. In particular, Toussaint \cite{EN} uses the result for simple polygons as a black box and constructs a flip sequence which requires $O(n)$ time to determine the next flip. With this in mind, consider $$\tau(P):=\sum_{i=1}^{k_n} \tau_i(P),$$ where $\tau_i(P)\ge 0$ is the area increase at the $i$-th step given by Toussaint's process of selecting flips. An important feature of $\tau$ is that it identifies convexity: $\tau(P)=0$ if and only if $P$ is convex. Our main result consists of the following Bonnesen-type isoperimetric inequality for general polygons.

\begin{theorem}\label{thm: main}
Let $n \ge 3$ and $P$ be an $n$-gon. There exists $C(n)>0$ such that  
\begin{equation}\label{eq: goal} 
\tau(P)+v(P) \le C(n) \delta(P).
\end{equation}
\end{theorem}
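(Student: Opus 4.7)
The plan is a three-step reduction: convert $P$ to a convex polygon via flips, do local spectral analysis around the regular polygon using circulant matrix theory, and close the gap with compactness.

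\textbf{Step 1 (Reduction to the convex case).} Given $P$, apply Toussaint's flip sequence to obtain a convex $n$-gon $\tilde P$ with $L(\tilde P)=L(P)$ and $F(\tilde P)=F(P)+\tau(P)$, so that
\begin{equation*}
\delta(P)\;=\;\delta(\tilde P)\;+\;4n\tan(\pi/n)\,\tau(P),
\end{equation*}
which immediately gives $\tau(P)\le \delta(P)/(4n\tan(\pi/n))$ and $\delta(\tilde P)\le \delta(P)$. Flips preserve side lengths, so $\sigma_s^2(P)=\sigma_s^2(\tilde P)$. The radii variance is subtler since the barycenter shifts under each flip; I would bound this shift by the corresponding pocket area (and hence by $\tau_i$) and sum over the flip sequence to get $\sigma_r^2(P)\le \sigma_r^2(\tilde P)+C(n)\tau(P)$. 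This reduces the problem to proving $v(Q)\le C(n)\delta(Q)$ for convex $Q$.

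\textbf{Step 2 (Local spectral analysis via circulant matrices).} Parametrize convex polygons close to the regular one by
\begin{equation*}
z_k\;=\;e^{2\pi i k/n}(1+\xi_k),\qquad \xi=(\xi_0,\dots,\xi_{n-1})\in\CC^n,
\end{equation*}
and Taylor expand the perimeter and shoelace area to get
\begin{equation*}
\delta(Q)\;=\;Q_\delta(\xi)+O(\|\xi\|^3),\qquad v(Q)\;=\;Q_v(\xi)+O(\|\xi\|^3),
\end{equation*}
with $Q_\delta, Q_v$ real quadratic forms on $\RR^{2n}$. Both are invariant under the cyclic relabeling $\xi_k\mapsto \xi_{k+1}$, so they are simultaneously block diagonalized by the discrete Fourier transform into blocks indexed by $j=0,1,\dots,n-1$. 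I would then verify (a) that $\ker Q_\delta$ is the four-dimensional space of infinitesimal similarities of the regular polygon (two translations, one rotation, one dilation), with a quantitative lower bound for $Q_\delta$ on its orthogonal complement, and (b) that $\ker Q_v\supseteq \ker Q_\delta$ with $Q_v\le C_0(n)\,Q_\delta$ mode by mode. Combining yields $v(Q)\le C(n)\delta(Q)+O(\|\xi\|^3)$, and for small $\|\xi\|$ the cubic error is absorbed into the quadratic main term. For convex $Q$ whose similarity class is bounded away from that of the regular polygon, a Blaschke-type compactness argument on normalized convex $n$-gons gives a uniform upper bound on $v(Q)/\delta(Q)$; combining with the local estimate yields $v(Q)\le C(n)\delta(Q)$ for all convex $Q$.

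The main obstacle is Step 2: verifying that no spurious zero eigenvalues of $Q_\delta$ appear beyond the four-dimensional similarity kernel, and producing the explicit mode-by-mode comparison $Q_v\le C_0(n)\,Q_\delta$. Both reduce to finite trigonometric computations only because the circulant structure diagonalizes everything through the discrete Fourier transform---this is precisely the role of circulant matrix theory advertised in the abstract.
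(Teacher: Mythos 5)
Your Step 2 is essentially the paper's convex-case argument (Taylor expansion at the regular polygon, vanishing of the value and gradient there, comparison of the two quadratic forms after a Fourier/circulant diagonalization, plus a compactness bound away from the regular polygon), just written in vertex-perturbation coordinates instead of the paper's angle--radius coordinates on a constraint manifold; the items you flag as ``to verify'' are exactly the trigonometric/spectral computations that occupy most of the paper, so that part is a sketch but a sound one.

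The genuine gap is in Step 1, in the transfer of $\sigma_r^2$ back from the convexified polygon. You propose to bound the barycenter shift (and implicitly the vertex displacements) at each flip by the pocket area $\tau_i$. That mechanism fails: a flip reflects the pocket vertices across the chord of support, so a vertex moves by twice its distance to that chord, and a thin pocket over a \emph{short} chord can have arbitrarily small area while its deep vertex is displaced by a distance comparable to the diameter. Hence neither the vertex motions nor the centroid shift, nor the change in $\sigma_r^2$, is controlled by $\tau_i$, and the claimed inequality $\sigma_r^2(P)\le \sigma_r^2(\tilde P)+C(n)\tau(P)$ is unjustified (and at best would need an entirely different proof). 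The paper avoids this issue altogether: it invokes the Fisher--Ruoff--Shilleto inequality $8n^2\sin^2\frac{\pi}{n}\,\sigma_r^2(P)\le \delta(P)+n^2\sigma_s^2(P)$, which holds for \emph{arbitrary} $n$-gons, so $\sigma_r^2$ of the original polygon is controlled directly by $\delta(P)$ and $\sigma_s^2(P)$; only $\sigma_s^2$ has to be transported through the flip sequence, and that is immediate because flips preserve the side lengths. To repair your argument you should either import that inequality (or prove an analogue valid for nonconvex polygons) rather than trying to track the radii through the convexification.
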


%It will be evident from the proof that this inequality is sharp in the exponents. 
\noindent Note that if $\delta(P)=0$, then Theorem \ref{thm: main} immediately implies that $P$ is convex and regular. Moreover, the inequality is sharp in the exponents (see Remark \ref{sharp}) and yields a stability estimate in terms of the $L^1$ distance of $P$ from its convexification in the class of simple $n$-gons: denote the \textit{c-asymmetry index} of $P$ by $$\alpha_c(P):=|P \Delta P_c|,$$  where $P_c$ is a convex $n$-gon given by the Erd{\H o}s-Nagy theorem. Then, the following result holds.

\begin{corollary}\label{thm: main2}
Let $n \ge 3$ and $P$ be a simple $n$-gon. There exists $C(n)>0$ such that  
\begin{equation}\label{eq: goal2}
\alpha_c(P)+v(P) \le C(n) \delta(P).
\end{equation}
\end{corollary}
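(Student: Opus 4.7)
The plan is to deduce Corollary~\ref{thm: main2} immediately from Theorem~\ref{thm: main} by establishing the identity $\alpha_c(P)=\tau(P)$ for simple $n$-gons.

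First, I would analyze a single step of the Toussaint flip sequence. Starting from the simple polygon $P_{i-1}$, the process selects a pocket $Q_i$, i.e., a connected component of $\mathrm{conv}(P_{i-1})\setminus P_{i-1}$, whose boundary consists of a line segment $\ell_i\subset\partial\,\mathrm{conv}(P_{i-1})$ (the line of support) together with a polyline $L_i\subset \partial P_{i-1}$. The flip replaces $L_i$ by its reflection $L_i'$ across $\ell_i$, producing a new simple $n$-gon $P_i$ with the same perimeter as $P_{i-1}$.

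Next, I would verify the key geometric fact $P_{i-1}\subset P_i$. Because $\ell_i$ is a line of support for $\mathrm{conv}(P_{i-1})$, the convex hull lies entirely on one side of $\ell_i$, and the same is true of the pocket $Q_i\subset\mathrm{conv}(P_{i-1})$. Consequently, the reflected pocket $Q_i'$ lies on the opposite side of $\ell_i$, and is therefore disjoint from $\mathrm{conv}(P_{i-1})\supset P_{i-1}$. Since $Q_i$ is exterior to $P_{i-1}$ by definition of a pocket, the three sets $P_{i-1}$, $Q_i$ and $Q_i'$ have pairwise intersections of zero measure, and a direct inspection of the new boundary shows
$$
P_i = P_{i-1}\cup Q_i\cup Q_i'.
$$
In particular $P_{i-1}\subset P_i$, and the area increase is $\tau_i(P)=|P_i|-|P_{i-1}|=2|Q_i|$.

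Iterating over the finite sequence of flips $P=P_0\subset P_1\subset\cdots\subset P_{k_n}=P_c$ provided by the Erd{\H o}s-Nagy theorem, we obtain $P\subset P_c$, and hence
$$
\alpha_c(P) = |P\,\Delta\, P_c| = |P_c|-|P| = \sum_{i=1}^{k_n}\bigl(|P_i|-|P_{i-1}|\bigr) = \tau(P).
$$
Combining this identity with Theorem~\ref{thm: main} yields
$$
\alpha_c(P)+v(P) = \tau(P)+v(P) \le C(n)\,\delta(P),
$$
which is \eqref{eq: goal2}. The only step that requires geometric verification is the inclusion $P_{i-1}\subset P_i$, i.e., the assertion that a pocket flip enlarges the polygon by appending both the pocket and its mirror image; beyond that, the corollary is an algebraic consequence of Theorem~\ref{thm: main}, and no further analytic difficulty arises.
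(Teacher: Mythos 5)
Your proof is correct and follows essentially the same route as the paper: both rest on the observation that for a simple polygon each flip produces a polygon containing the previous one, so the total area increase $\tau(P)$ controls $|P\,\Delta\,P_c|$ with $P_c:=P_{k_n}$, after which Theorem \ref{thm: main} gives the bound. The only cosmetic difference is that you establish the exact identity $\alpha_c(P)=\tau(P)$ via the decomposition $P_i=P_{i-1}\cup Q_i\cup Q_i'$, whereas the paper only needs $\alpha_c(P)\le\tau(P)$, obtained from the containments $|P_i\,\Delta\,P_{i-1}|=|P_i|-|P_{i-1}|$ and the triangle inequality in $L^1$.
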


Furthermore, in the class of convex polygons, the variation completely identifies minimizers. 

\begin{corollary}\label{thm: main3}
Let $n \ge 3$ and $P$ be a convex $n$-gon. There exists $C(n)>0$ such that  
\begin{equation}\label{eq: goal2}
\sigma_s ^2(P) + \sigma_r^2(P) \le C(n) \delta(P).
\end{equation}
\end{corollary}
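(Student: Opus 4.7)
The plan is to deduce the corollary as a direct specialization of Theorem \ref{thm: main} to the convex class. Theorem \ref{thm: main} already supplies the inequality
\[
\tau(P) + v(P) \le C(n)\,\delta(P)
\]
for an arbitrary $n$-gon $P$, and by definition $v(P) = \sigma_s^2(P) + \sigma_r^2(P)$ coincides exactly with the left-hand side that Corollary \ref{thm: main3} asserts. Therefore, everything reduces to verifying that the remaining term $\tau(P)$ may be discarded from the left: in other words, that $\tau(P) = 0$ whenever $P$ is convex.

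For this step I would recall that $\tau(P) = \sum_{i=1}^{k_n} \tau_i(P)$, where each $\tau_i(P) \ge 0$ records the area gained at the $i$-th stage of Toussaint's iterative flipping procedure. That procedure is designed to convexify a polygon by repeatedly reflecting a pocket---a maximal connected region exterior to the polygon but interior to its convex hull---across its supporting line, invoking the Erd{\H o}s--Nagy theorem as a black box. When the input polygon is already convex, no pockets exist, its boundary coincides with that of its convex hull, and the algorithm halts after zero flips. Consequently every $\tau_i(P)$ vanishes and $\tau(P) = 0$. Substituting this into Theorem \ref{thm: main} immediately yields the claimed inequality with the same constant $C(n)$.

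There is no substantive obstacle here, since all the analytical work has been carried out in proving Theorem \ref{thm: main}; the only verification required is the essentially tautological observation that Toussaint's procedure detects convexity, a property highlighted in the discussion preceding the theorem. The conceptual content of the corollary is nevertheless worth emphasizing: within the convex class the variation alone characterizes minimizers, since $\delta(P) = 0$ forces $\sigma_s^2(P) = \sigma_r^2(P) = 0$, so that $P$ is simultaneously equilateral and cyclic, hence regular.
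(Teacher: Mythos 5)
Your proposal is correct and matches the paper's (implicit) reasoning: Corollary \ref{thm: main3} is exactly Theorem \ref{thm: main} restricted to convex polygons, where $\tau(P)=0$ because Toussaint's flipping procedure performs no flips on a polygon with no pockets (equivalently, the convex case is precisely what is established in \S 3.1--3.5 before the reduction via Erd{\H o}s--Nagy). Your observation that $\tau$ vanishes exactly on convex polygons is the same property the paper highlights when introducing $\tau$, so no further verification is needed.
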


Theorem \ref{thm: main} yields analogous stability results for cyclic and also for equilateral polygons. This demonstrates the versatility of the lower bound given by \eqref{eq: goal}; indeed, the three quantities which comprise it (i.e. $\sigma_s$, $\sigma_r$, $\tau$) independently measure how far a given polygon deviates from the three attributes of the minimizer: cyclicity, equilaterality, and convexity.        
 
The proof is carried out in several steps. First, we consider the case when $P$ is convex and translate the problem into a functional inequality on $\RR^{2n}$ of the form $f \lesssim g$ subject to constraints involving the center of mass and the homogeneity of the variation. We show that the constraints define a compact $(2n-4)$-dimensional manifold $\mathcal{M}$ which in some sense parameterizes the class of convex polygons under investigation. The regular polygon corresponds to a point $z_* \in \mathcal{M}$ and we perform a Taylor expansion of $f$ and $g$ at $z_*$. By computing the tangent space of $\mathcal{M}$ at $z_*$ and the $2n \times 2n$ Hessian matrices of $f$ and $g$ at $z_*$, we reduce the problem to a matrix inequality. It turns out that the Hessians of $f$ and $g$ consist of blocks of circulant matrices and can be compared through delicate trigonometric matrix identities (see e.g. \eqref{eq: C-B^T B in terms of H} and \eqref{eq: D-B^T B in terms of C-B^T B and E}). The desired matrix inequality is established by performing a suitable change of coordinates and utilizing the spectral theory for circulant matrices. To finish the proof, the general case is reduced to the convex case via the Erd{\H o}s-Nagy theorem. The constant of proportionality in \eqref{eq: goal} depends on the number of sides of the polygon and the $C^3$ norms of $f$ and $g$ on $\mathcal{M}$ in a rather complicated way; nevertheless, $f$, $g$, and $\mathcal{M}$ are explicit in our construction. Last, we point out that our method of proving Theorem \ref{thm: main} may be adapted to produce other geometrically meaningful lower bounds on the  polygonal isoperimetric deficit, see Remark \ref{genth}.     

\vskip .1in 

\noindent {\bf Acknowledgements}

\vskip .05in 

\noindent We wish to thank Francesco Maggi for suggesting this line of research and Davi M\'aximo for pointing out a useful reference. Moreover, the excellent research environment provided by the University of Texas at Austin, Australian National University, MSRI, CNA, and Instituto Superior T\'ecnico is kindly acknowledged.

\section{Preliminaries}

\subsection{Setup} 
Let $n \geq 3$ and $P \subset \RR ^2$ be an $n$-gon with vertices $\{A_1, A_2,
\ldots, A_n \} \subset \RR ^2$ and center of mass $O$ which we take to be the origin. For $i \in \{1,2,\ldots,n\}$, the $i$-th side length of $P$ is $l_i:=A_iA_{i+1}$, where $A_i=A_j$ if and only if $i=j$ (mod $n$); $\{r_i:=OA_i\}_{i=1}^n$ is the set of radii; $F(P)$ is the area; $L(P):=\sum \limits_{i=1}^{n} l_i$ is the perimeter and $S(P):=\sum \limits_{i=1}^{n} l_i^2$. The variance of the sides and radii of $P$ are represented, respectively, by the quantities $$\sigma_s^2(P):=\frac{1}{n}S(P)-\frac{1}{n^2}L^2(P),$$ $$\sigma_r^2(P):=\frac{1}{n} \sum
\limits_{i=1}^{n} r_i^2 - \frac{1}{n^2} \left(\sum \limits_{i=1}^{n}
r_i\right)^2.$$

\subsection{Circulant matrices}

The key objects in our analysis are the so-called circulant matrices which arise in various branches of mathematics, see e.g. \cite{gray'06}. In what follows, we recall some basic properties which will be employed in our study. Let $\{a_0,a_1,\ldots,a_{n-1}\} \subset \CC$ be a given set of complex numbers; then, the matrix 
\begin{equation}\label{eq: circ_matr_def}
A=
  \begin{pmatrix}
  a_0 & a_1& a_2 & \cdots & a_{n-1}\\
  a_{n-1} & a_0&a_1&\cdots &a_{n-2}\\
  \vdots & \ddots& \ddots& \ddots& \vdots\\
  a_2 &a_3 & \cdots& a_0&a_1\\
  a_1 & a_2 &\cdots &a_{n-1}& a_0
\end{pmatrix}
\end{equation}
is called the circulant matrix generated by $\{a_0,a_1,\ldots,a_{n-1}\}$. Note that to form this matrix, one simply takes $(a_0,a_1,\ldots,a_{n-1})$ as the first row and thereafter cyclically permutes the entries to obtain the next row. Circulant matrices enjoy many useful properties, one of which is that their eigenvalues and eigenvectors are explicit. Let $\{\omega _k:=e^{\frac{2 \pi i k}{n}}\}_{k=0}^{n-1}$ be the $n$-th roots of unity and set
\begin{equation}\label{eq: eigs_of_circ_matr}
  \psi _k:=\sum \limits _{j=0}^{n-1} a_j \omega _k^j.
\end{equation}
It is not difficult to check that the eigenvalues of the matrix defined in \eqref{eq: circ_matr_def} are the complex numbers $\{ \psi _k \}_{k=0}^{n-1}$ given by \eqref{eq: eigs_of_circ_matr}. Furthermore, the eigenvector corresponding to $\psi _k$ is given by $u_k:=(1,\omega _k,\omega _k^2,\ldots,\omega _k^{n-1})$, and the set $\{u_0,u_1,\ldots,u_{n-1}\}$ forms a complex orthogonal basis in $\CC ^n$.
%\begin{theorem}\label{thm: eigs_of_circ_matr}
%The eigenvalues of the matrix defined in \eqref{eq: circ_matr_def} are the complex numbers $\{ \psi _k \}_{k=0}^{n-1}$ given by \eqref{eq: eigs_of_circ_matr}. Furthermore, the eigenvector corresponding to $\psi _k$ is the vector $u_k:=(1,\omega _k,\omega _k^2,\cdots,\omega _k^{n-1})$, and the set $\{u_0,u_1,\cdots,u_{n-1}\}$ forms a complex orthogonal basis in $\CC ^n$.
%\end{theorem}
%\noindent The proof of this theorem is straightforward and can be found for instance in \cite{gray'06}. 
Note that all circulant matrices share the  same orthogonal basis of eigenvectors. In our analysis, we shall make use of the following result concerning real symmetric circulant matrices. The proof is elementary and we present it solely for the reader's convenience. 

%i.e. and assume it real and symmetric, i.e. $\{a_0,a_1,\cdots, a_{n-1}\} \subset \RR$ and $a_l=a_{n-l}$ for all $l$.

\begin{proposition}\label{prp: eigs_real_sym_circ_matr}
Let $A$ be the matrix given by \eqref{eq: circ_matr_def}. If $A$ is real and symmetric, then the eigenvalues of $A$ satisfy $\psi _k=\psi _{n-k}$ for all $k \in \{1,\ldots,n-1\}$. Moreover, let $v_0:=(1,\ldots,1)$ and for $l \in \{1,\ldots, \lfloor \frac{n}{2} \rfloor \}$, define
   \begin{align}
     v_{2l-1}&:=\left(1,\cos \frac{2 \pi l}{n},\cos \frac{4 \pi l}{n},\ldots,\cos \frac{2 \pi l (n-1)}{n}\right),\nonumber\\ 
     v_{2l}&:=\left(0,\sin \frac{2\pi l}{n},\sin \frac{4\pi l}{n},\ldots,\sin \frac{2 \pi l(n-1)}{n}\right).
   \end{align} 
Then, $v_k$ is an eigenvector of $A$ corresponding to the eigenvalue $\psi _{\lceil \frac{k}{2} \rceil}$, and the set $\{v_0,v_1,\ldots,v_{n-1}\}$ forms a real orthogonal basis in $\RR ^n$.
 \end{proposition}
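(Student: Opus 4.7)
My plan is to deduce the proposition from the spectral formulas for a general circulant matrix. First, the symmetry of $A$ in \eqref{eq: circ_matr_def} is equivalent to the coefficient relation $a_j = a_{n-j}$ for $j \in \{1, \ldots, n-1\}$, obtained by comparing the first row with the first column of $A$. Combining this with the reality of the $a_j$'s and the identity $\omega_{n-k} = \overline{\omega_k}$, the formula \eqref{eq: eigs_of_circ_matr} gives $\psi_{n-k} = \overline{\psi_k}$; since $A$ is real and symmetric, each $\psi_k$ is real, and hence $\psi_k = \psi_{n-k}$.

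For the eigenvector statement, the key observation is that $v_{2l-1} = \Re(u_l)$ and $v_{2l} = \Im(u_l)$, where $u_l = (1, \omega_l, \omega_l^2, \ldots, \omega_l^{n-1})$ is the complex eigenvector of $A$ associated with $\psi_l$, and $v_0 = u_0$. Since $A$ has real entries and $\psi_l \in \RR$, splitting the identity $Au_l = \psi_l u_l$ into real and imaginary parts immediately yields $Av_{2l-1} = \psi_l v_{2l-1}$ and $Av_{2l} = \psi_l v_{2l}$; the indexing agrees with the claim because $\lceil (2l-1)/2 \rceil = \lceil 2l/2 \rceil = l$. To establish orthogonality---which does not follow from symmetry alone, since distinct indices $l$ may produce coincident eigenvalues beyond the pairing $\psi_l = \psi_{n-l}$---I would compute $\langle v_k, v_{k'}\rangle$ directly. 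Expanding the products of cosines and sines with the product-to-sum formulas and applying the elementary identities $\sum_{j=0}^{n-1}\cos(2\pi k j/n) = n$ if $n \mid k$ and $0$ otherwise, together with $\sum_{j=0}^{n-1}\sin(2\pi kj/n) = 0$, produces $\|v_0\|^2 = n$, $\|v_{2l-1}\|^2 = \|v_{2l}\|^2 = n/2$ (whenever nonzero), and the vanishing of every mixed inner product $\langle v_k, v_{k'}\rangle$ for $k \ne k'$.

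The argument is essentially computational and no single step poses a serious obstacle; the only bookkeeping subtlety is that when $n$ is even and $l = n/2$ the vector $v_{2l} = v_n$ degenerates to zero and is thus naturally absent from the set $\{v_0, v_1, \ldots, v_{n-1}\}$. Once this is accounted for, exactly $n$ vectors remain, pairwise orthogonal and nonzero, and therefore constitute the asserted real orthogonal basis of $\RR^n$.
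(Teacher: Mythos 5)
Your proof is correct, and for the eigenvalue pairing and the eigenvector identification it coincides with the paper's argument: since $u_{n-l}=\overline{u_l}$, your vectors $\Re(u_l)$ and $\Im(u_l)$ are exactly the paper's combinations $\frac{u_l+u_{n-l}}{2}$ and $\frac{u_l-u_{n-l}}{2i}$, and both arguments rest on $\psi_l=\psi_{n-l}$ being real. The only genuine divergence is the orthogonality step: the paper inherits almost all of it from the mutual orthogonality of the complex Fourier vectors $u_k$ (which holds for every circulant matrix, irrespective of eigenvalue multiplicities), leaving only the single inner product $\langle v_{2l-1},v_{2l}\rangle$ to compute, whereas you verify every inner product directly via product-to-sum identities and the standard exponential sums. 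Both routes are sound, and you are right to stress that orthogonality cannot be extracted from the spectral theorem for $A$ alone because of possible coincidences among the $\psi_l$; your computation is more pedestrian, while the paper's is shorter and makes the structural reason (the shared Fourier eigenbasis) explicit. Two bookkeeping remarks: the degenerate vector $v_n$ you flag for $n$ even, $l=n/2$, carries an index outside $\{0,\dots,n-1\}$, so it is excluded by the statement itself, exactly as you conclude; and when $n$ is even the vector $v_{n-1}=(1,-1,\dots,-1)$ has squared norm $n$ rather than $n/2$ --- a harmless slip in a side remark, since the norms play no role in the proposition.
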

 \begin{proof}
First, since $A$ is symmetric, all its eigenvalues are real. Moreover, $\omega _{n-k}=\overline{\omega _k}$ and since $\{a_j\} \subset \RR$, we have 
   \begin{equation}
          \psi _{n-k}=\sum \limits _{j=0}^{n-1} a_j \omega _{n-k}^j=\sum \limits _{j=0}^{n-1} a_j \overline{\omega _k}^j=\overline{\psi _k}=\psi_k.
   \end{equation}
%   But $\psi _l$ is real, hence
%   \begin{equation}\label{eq: psi_symmetry}
%     \psi _{n-l}=\psi _l .
%   \end{equation}
Now let $u_k=(1,\omega _k,\omega _k^2,\ldots,\omega _k^{n-1})$, and note that 
   \begin{equation*}
     v_{2l-1}=\frac{u_l+u_{n-l}}{2}, \quad v_{2l}=\frac{u_l-u_{n-l}}{2i},
   \end{equation*}
for $l \in \{1,\ldots, \lfloor \frac{n}{2} \rfloor \}$. Therefore, the vectors $v_{2l-1}$ and $v_{2l}$ are eigenvectors corresponding to the eigenvalue $\psi _l$; note also that $v_0=u_0$. It remains to prove that the $v_k$'s are mutually orthogonal. Since $A$ is a circulant matrix, $u_k \perp u_{k'}$, whenever $k \neq k'$. Hence, $v_{2l'}, v_{2l'-1} \perp v_{2l},v_{2l-1}$, for $1\leq l \neq l' \leq \frac{n}{2}$. Moreover, $v_k \perp v_0$, for all $1\leq k \leq n-1$, and for $1\leq l \leq \frac{n}{2}$,     
\begin{align*}
     \langle v_{2l-1},v_{2l} \rangle &=\langle \frac{u_l+u_{n-l}}{2},\frac{u_l-u_{n-l}}{2i} \rangle \\
     &=\frac{1}{4i}\left( |u_l|^2-|u_{n-l}|^2-2i \Im \langle u_l,u_{n-l}\rangle \right)=0;
   \end{align*}
 thus, $v_{2l} \perp v_{2l-1}$.   
 \end{proof}
 
 \begin{remark}
All real symmetric circulant matrices share the same real orthogonal basis of eigenvectors. 
 \end{remark}

\section{Proof of Theorem \ref{thm: main}}

The proof will be split up into two cases. First, we handle the convex case in \S \ref{sec1} - \S \ref{lastsec}, and then consider the general case in \S \ref{gen}.

\subsection{A functional formulation} \label{sec1}
In what follows, the dependence on $P$ will often be omitted to simplify the notation. The starting point is the following well-known inequality (see \cite[pg. 35]{shilleto'85}) which holds for any $n$-gon:
\begin{equation}\label{eq: shiletto1}
 8n^2  \sigma_r ^2 \sin ^2 \frac{\pi}{n} \le nS-c_n F;
\end{equation}
since $n^2 \sigma_s ^2=nS-L^2$, it follows that \eqref{eq: shiletto1} is equivalent to 
\begin{equation} \label{fin}
8n^2  \sin ^2 \frac{\pi}{n} \ \sigma_r ^2 \le   \delta+n^2 \sigma_s ^2 .
\end{equation}
Therefore, in order to establish \eqref{eq: goal} for convex $n$-gons, it suffices to prove that $\sigma_s ^2(P) \le C(n) \delta(P) $ for some positive constant $C(n)$. However, this is equivalent to showing that the ratio $$\frac{nS-L^2}{L^2-c_nF}$$ is bounded; in particular, it suffices to show that the ratio
\begin{equation} \label{ratio}
\frac{nS-c_n F}{L^2-c_n F}=\frac{nS-L^2}{L^2-c_nF}+1
\end{equation}
is bounded. Let $x_i$ be the angle between radii $OA_i$ and $OA_{i+1}$, for
$i=1,2,\ldots,n$. Since $P$ is convex, we have that $\sum \limits _{i=1}^n x_i=2 \pi$. Furthermore, 
\begin{equation}\label{eq: SLF_formulas}
  \begin{cases}
S=\sum  \limits_{i=1}^{n} l_i^2=\sum \limits_{i=1}^{n} \left( r_{i+1}^2+r_i^2-2r_{i+1}r_i \cos
x_i\right);\\
L=\sum  \limits_{i=1}^{n} l_i=\sum \limits_{i=1}^{n} \left( r_{i+1}^2+r_i^2-2r_{i+1}r_i \cos
x_i\right)^{1/2};\\
F=\frac{1}{2} \sum \limits_{i=1}^n r_i r_{i+1} \sin x_i.
\end{cases}
\end{equation}
Using these formulas we obtain
\begin{align}\label{eq: NS-c_nF}
nS-4n \tan \frac{\pi}{n}F=n\sum \limits_{i=1}^{n} \left( r_{i+1}^2+r_i^2-2r_{i+1}r_i \cos
x_i\right)-2n \tan \frac{\pi}{n} \sum \limits_{i=1}^n r_i r_{i+1} \sin x_i,
\end{align}
\begin{align}\label{eq: L2-c_nF}
L^2-4n \tan \frac{\pi}{n}F=\left(\sum \limits_{i=1}^{n} \left( r_{i+1}^2+r_i^2-2r_{i+1}r_i \cos
x_i\right)^{1/2}\right)^2-2n  \tan \frac{\pi}{n} \sum \limits_{i=1}^n r_i r_{i+1} \sin x_i.
\end{align}
Let
\begin{align}\label{eq: f}
f(x_1,x_2,\ldots,x_n;r_1,r_2,\ldots,r_n)&:=n\sum \limits_{i=1}^{n} \left( r_{i+1}^2+r_i^2-2r_{i+1}r_i \cos
x_i\right)\\ \nonumber
&-2n \tan \frac{\pi}{n} \sum \limits_{i=1}^n r_i r_{i+1} \sin x_i,
\end{align}
and
\begin{align}\label{eq: g}
g(x_1,x_2,\ldots,x_n;r_1,r_2,\ldots,r_n)&:= \left(\sum \limits_{i=1}^{n} \left( r_{i+1}^2+r_i^2-2r_{i+1}r_i \cos
x_i\right)^{1/2}\right)^2\\ \nonumber
&-2n \tan \frac{\pi}{n} \sum \limits_{i=1}^n r_i r_{i+1} \sin x_i.
\end{align}
By setting $(x;r):=(x_1,x_2,\ldots,x_n;r_1,r_2,\ldots,r_n)$, we note that in order to prove \eqref{eq: goal}, it suffices to prove that

\begin{equation}\label{eq: f<Cg}
  f(x;r) \leq C(n) \ g(x;r)
\end{equation}
for all $(x;r) \in \RR ^{2n}$ satisfying
\begin{equation}\label{eq: sum_x_i=2pi}
  \sum \limits _{i=1}^{n} x_i=2 \pi.
\end{equation}

\noindent Moreover, since $f$ and $g$ are 2-homogeneous in the $r$ variable, we may assume without loss of generality that
\begin{equation}\label{eq: sum_r^2=n}
  \sum \limits _{i=1}^{n} r_i=n.
\end{equation}

\noindent Next, note that a point $O$ is the centroid of $P$ if and only if
\begin{equation*}
  \sum \limits _{i=1}^{n} \overrightarrow{OA_i}=0,
\end{equation*}
which is equivalent to saying that the projections of $\sum \limits _{i=1}^{n} \overrightarrow{OA_i}$ onto $\overrightarrow{OA_1}$ and $\overrightarrow{OA_1}^\perp$ vanish. The projection in the $\overrightarrow{OA_1}$ direction is
\begin{equation*}
  \sum \limits _{i=1}^{n} r_i \cos \left(\sum \limits_{k=1}^{i-1} x_k \right),
\end{equation*}
where an empty sum is to be understood as $0$. Similarly the projection in the orthogonal direction $\overrightarrow{OA_1}^\perp$ is
\begin{equation*}
  \sum \limits _{i=1}^{n} r_i \sin \left(\sum \limits_{k=1}^{i-1} x_k \right).
\end{equation*}
Hence, 
\begin{equation}\label{eq: centroid_cond}
  \begin{cases}
  \sum \limits _{i=1}^{n} r_i \cos \left(\sum \limits_{k=1}^{i-1} x_k \right)=0,\\
  \sum \limits _{i=1}^{n} r_i \sin \left(\sum \limits_{k=1}^{i-1} x_k \right)=0.
  \end{cases}
\end{equation}

\noindent Note that subject to the constraints \eqref{eq: sum_x_i=2pi}, \eqref{eq: sum_r^2=n}, \eqref{eq: centroid_cond},  the regular $n$-gon corresponds to the point $(x_*; r_*)=\left(\frac{2\pi}{n},\ldots,\frac{2\pi}{n};1,\ldots,1\right)$.
Let
\begin{equation}
  \mathcal{M}:=\Big \{ (x;r) \in \RR ^{2n}:\ x_i,r_i \geq 0,\ \eqref{eq: sum_x_i=2pi},\ \eqref{eq: sum_r^2=n},\ \eqref{eq: centroid_cond} \ \mbox{hold} \Big\},
\end{equation}
and observe that $\mathcal{M}$ is a compact manifold of dimension $2n-4$ and all convex $n$-gons with centroid $(0,0) \in \RR ^2$ have a representation as points $(x;r)\in \mathcal{M}$ where $(x;r)$ is associated with the $n$-gon whose $i$-th vertex has distance $r_i$ from the origin and two consecutive vertices form an angle $x_i$. Thus, in order to prove that the ratio $\frac{nS-c_n F}{L^2-c_n F}$ is bounded, it suffices to establish inequality \eqref{eq: f<Cg} on the set $\mathcal{M}$. The next step consists of localizing the problem.

\subsection{Localization}

The polygonal isoperimetric inequality implies $g \geq 0$ with equality if and only if $P$ is the regular $n$-gon; therefore, $g(x;r)=0$ for $(x,r) \in \mathcal{M}$ if and only if $(x;r)=(x_*;r_*)$. By \cite[(4.1)]{shilleto'85}, the same is true for the function $f$. Therefore, since $f$ and $g$ are continuous, for every neighborhood $B_{\delta}$ of the point $(x_*;r_*)$, it follows that if $(x;r) \in \mathcal{M} \setminus B_{\delta}$, then 
\begin{equation*}
  f(x;r) \leq C \ g(x;r),
\end{equation*}
where $$0<C:= \frac{\sup \limits_{\mathcal{M} \setminus B_{\delta}} f}{\inf \limits_{\mathcal{M} \setminus B_{\delta}} g}<\infty.$$
Therefore, in order to prove inequality \eqref{eq: f<Cg}, it suffices to prove it for some neighborhood $B_{\delta}$ of the point $(x_*;r_*)$.

\subsection{Tangent space of $\mathcal{M}$ at $(x_*;r_*)$}

Let $\HH$ be the tangent space of the manifold $\mathcal{M}$ at the point $(x_*;r_*)$. To simplify the notation, set $z:=(x;r)$ and $z_*=(x_*;r_*)$. Furthermore, let
\begin{align}
  h_1(x;r)&:=\sum \limits_{i=1}^{n} x_i,\\ \nonumber
  h_2 (x;r)&:=\sum \limits_{i=1}^{n} r_i,\\ \nonumber
  h_3(x;r)&:=\sum \limits _{i=1}^{n} r_i \cos \left(\sum \limits_{k=1}^{i-1} x_i \right), \\ \nonumber
  h_4(x;r)&:=\sum \limits _{i=1}^{n} r_i \sin \left(\sum \limits_{k=1}^{i-1} x_i \right).
\end{align}

\noindent Then, $\mathcal{M}$ is defined by the equations $h_i(x;r)=0$ for $i=1,2,3,4$,
therefore $\HH$ is given by
\begin{equation*}
\langle (x;r), \nabla h_i(z_*)\rangle=0, \ i=1,2,3,4;
\end{equation*}
thus, to identify $\HH$, we compute the gradients of the functions $h_i$ at the point $z_*$: 

\begin{align}\label{eq: grads of h1,h2}
  \nabla h_1(z_*)=(1,1,\ldots,1;0,0,\ldots,0),\\ \nonumber
  \nabla h_2(z_*)=(0,0,\ldots,0;1,1,\ldots,1),
\end{align}
\begin{equation}\label{eq: grad of h3}
  \begin{cases}
    D_{r_k}h_3(z_*)=\cos \frac{2\pi(k-1)}{n},\\
    D_{x_k}h_3(z_*)=\frac{\cos \frac{\pi}{n}-\cos \frac{\pi(2k-1)}{n}}{2 \sin \frac{\pi}{n}}, \ k=1,2,\ldots,n,
  \end{cases}
\end{equation}
\begin{equation}\label{eq: grad of h4}
  \begin{cases}
    D_{r_k}h_4(z_*)=\sin \frac{2\pi(k-1)}{n},\\
    D_{x_k}h_4(z_*)=-\frac{\sin \frac{\pi}{n}+\sin \frac{\pi(2k-1)}{n}}{2 \sin \frac{\pi}{n}}, \ k=1,2,\ldots,n.
  \end{cases}
\end{equation}
Therefore, $\HH$ is given by
\begin{align*}
  \sum \limits_{k=1}^{n} x_k&=0,\\
  \sum \limits_{k=1}^{n} r_k&=0,\\
  \sum \limits_{k=1}^{n} \frac{\cos \frac{\pi}{n}-\cos \frac{\pi(2k-1)}{n}}{2 \sin \frac{\pi}{n}} x_k+\sum \limits_{k=1}^{n} r_k \cos \frac{2\pi(k-1)}{n}&=0,\\
  -\sum \limits_{k=1}^{n} \frac{\sin \frac{\pi}{n}+\sin\frac{\pi(2k-1)}{n}}{2 \sin \frac{\pi}{n}} x_k+\sum \limits_{k=1}^{n} r_k \sin \frac{2\pi(k-1)}{n}&=0,
\end{align*}
or equivalently by
\begin{align}\label{eq: tangent space H}
  \sum \limits_{k=1}^{n} x_k&=0,\\ \nonumber
  \sum \limits_{k=1}^{n} r_k&=0,\\ \nonumber
  -\sum \limits_{k=1}^{n} \frac{\cos \frac{\pi(2k-1)}{n}}{2 \sin \frac{\pi}{n}} x_k+\sum \limits_{k=1}^{n} r_k \cos \frac{2\pi(k-1)}{n}&=0,\\ \nonumber
  -\sum \limits_{k=1}^{n} \frac{\sin\frac{\pi(2k-1)}{n}}{2 \sin \frac{\pi}{n}} x_k+\sum \limits_{k=1}^{n} r_k \sin \frac{2\pi(k-1)}{n}&=0.
\end{align}

\subsection{Taylor expansion}

By expanding the functions $f,g$ into Taylor series around the point $z_*=(x_*;r_*)$, we have that
\begin{align}\label{eq: f_taylor}
  f(z)=&f(z_*)+Df(z_*)(z-z_*)+\frac{1}{2}\langle D^2f(z_*)(z-z_*) , (z-z_*) \rangle\\ \nonumber
  &+\frac{1}{6} \sum \limits _{i,j,k=1}^{2n} D_{ijk} f((1-\theta _z) z_*+\theta _z z)(z-z_*)_i(z-z_*)_j(z-z_*)_k,
 \end{align}
and
\begin{align}\label{eq: g_taylor}
  g(z)=&g(z_*)+Dg(z_*)(z-z_*)+\frac{1}{2}\langle D^2g(z_*)(z-z_*) , (z-z_*) \rangle\\ \nonumber
  &+\frac{1}{6} \sum \limits _{i,j,k=1}^{2n} D_{ijk} g((1-\tau _z) z_*+\tau _z z)(z-z_*)_i(z-z_*)_j(z-z_*)_k,
 \end{align}
 for some $\theta _z, \tau _z \in (0,1)$. Next, we establish a strategy of obtaining  \eqref{eq: f<Cg} by computing second derivatives of the functions $f$ and $g$ and reducing the problem to certain matrix inequalities.

\begin{lemma} \label{cond}
Suppose the following conditions hold:
\begin{itemize}
  \item[(i)] $f(z_*)=g(z_*)=0$;
  \item[(ii)] $Df(z_*)(z-z_*)=Dg(z_*)(z-z_*)=0$, for $z$ satisfying \eqref{eq: sum_x_i=2pi} and \eqref{eq: sum_r^2=n};
  \item[(iii)] $\langle D^2 f(z_*)(z-z_*) , (z-z_*) \rangle \leq C\langle D^2g(z_*)(z-z_*) , (z-z_*) \rangle$, for $z$ satisfying \eqref{eq: sum_x_i=2pi} and \eqref{eq: sum_r^2=n};
  \item[(iv)] $\langle D^2f(z_*)w , w \rangle >0$, for $w\neq 0$ such that $w \in \HH$.
\end{itemize}
Then, \eqref{eq: f<Cg} holds in some neighborhood of $z_*$.
\end{lemma}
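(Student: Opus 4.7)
The plan is to run a Taylor expansion argument in which the cubic remainder is absorbed once the quadratic part is shown to be coercive in the directions relevant to $\mathcal{M}$. Assumption (i) eliminates the constant term in \eqref{eq: f_taylor}--\eqref{eq: g_taylor}, and (ii) eliminates the linear term for every $z$ satisfying \eqref{eq: sum_x_i=2pi} and \eqref{eq: sum_r^2=n}; in particular, for every $z \in \mathcal{M}$. Writing $w:=z-z_*$ and $Q_f(w):=\langle D^2 f(z_*)w,w\rangle$, $Q_g(w):=\langle D^2 g(z_*)w,w\rangle$, we are left with
\begin{equation*}
f(z)=\tfrac{1}{2}Q_f(w)+R_f(z),\qquad g(z)=\tfrac{1}{2}Q_g(w)+R_g(z),
\end{equation*}
with $|R_f(z)|,|R_g(z)|\le M|w|^3$, where $M=M(n)$ bounds the third derivatives of $f,g$ on a fixed compact neighborhood of $z_*$.

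Next I would use that $\mathcal{M}$ is a smooth submanifold at $z_*$ with tangent space $\HH$. Because the first two defining equations of $\mathcal{M}$ (namely \eqref{eq: sum_x_i=2pi} and \eqref{eq: sum_r^2=n}) are linear, while $h_3,h_4$ are smooth, the implicit function theorem gives, for $z\in\mathcal{M}$ close enough to $z_*$, a decomposition $w=w_0+e$ with $w_0\in\HH$ and $|e|\le K|w_0|^2$. Condition (iv) says $Q_f$ is positive definite on the finite-dimensional space $\HH$, so there is $\lambda>0$ with $Q_f(w_0)\ge\lambda|w_0|^2$; combining with (iii) yields $Q_g(w_0)\ge\tfrac{\lambda}{C}|w_0|^2$. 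Substituting $w=w_0+e$ and using $|e|\le K|w_0|^2$ gives $Q_f(w)=Q_f(w_0)+O(|w_0|^3)$ and likewise for $Q_g$, so on a sufficiently small neighborhood of $z_*$ both quadratic forms remain comparable to $|w_0|^2$, and $|w|$ is comparable to $|w_0|$.

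To close the argument, I would combine the expansions with (iii):
\begin{equation*}
f(z)\le\tfrac{C}{2}Q_g(w)+M|w|^3+O(|w_0|^3)\le Cg(z)+M'|w_0|^3
\end{equation*}
for a constant $M'=M'(n,C,K,M)$. Since $g(z)\ge\tfrac{1}{2}Q_g(w_0)-O(|w_0|^3)\ge\tfrac{\lambda}{4C}|w_0|^2$ after shrinking the neighborhood further, the cubic term $M'|w_0|^3$ is dominated by any fixed positive multiple of $g(z)$ once $|w_0|$ is small enough. Hence there exists $\delta>0$ such that $f(z)\le(C+1)g(z)$ for all $z\in\mathcal{M}\cap B_\delta$, which is \eqref{eq: f<Cg} on a neighborhood of $z_*$.

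The main obstacle, though modest in this lemma, is the passage from an arbitrary increment $w=z-z_*$ (which need not lie in $\HH$) to a tangent vector $w_0\in\HH$ on which (iv) can be used; this is handled by the smoothness of $\mathcal{M}$ and the fact that two of its four defining equations are linear, which is what makes the error $|e|$ genuinely quadratic in $|w_0|$ and allows it to be absorbed into the cubic remainder.
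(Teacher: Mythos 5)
Your proposal is correct and follows essentially the same route as the paper: Taylor expansion at $z_*$, with (i)--(ii) killing the lower-order terms, (iv) giving coercivity of $\langle D^2f(z_*)\cdot,\cdot\rangle$ in the tangential directions (hence, via (iii), a lower bound for the quadratic form of $g$), and the cubic remainders absorbed on a small neighborhood. The only difference is cosmetic: where the paper argues that $\frac{z-z_*}{|z-z_*|}$ lies in a neighborhood $U$ of the unit sphere of $\HH$ on which the form stays $\geq \sigma/2$, you make the same point quantitatively by writing $z-z_*=w_0+e$ with $w_0\in\HH$ and $|e|\lesssim|w_0|^2$ from the smoothness of $\mathcal{M}$.
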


\begin{proof}
Since $f$ and $g$ are $C^3$ and $\mathcal{M}$ is compact, it follows that
\begin{align}\label{eq: D3f}
  \frac{1}{6} \sum \limits _{i,j,k=1}^{2n} D_{ijk} f((1-\theta _z) z_*+\theta _z z)(z-z_*)_i(z-z_*)_j(z-z_*)_k \leq C_\mathcal{M} |z-z_*|^3,
\end{align}
and
\begin{align}\label{eq: D3g}
  \frac{1}{6} \sum \limits _{i,j,k=1}^{2n} D_{ijk} g((1-\theta _z) z_*+\theta _z z)(z-z_*)_i(z-z_*)_j(z-z_*)_k \leq C_\mathcal{M} |z-z_*|^3,
\end{align}
for $z \in \mathcal{M}$, where $C_\mathcal{M}>0$. By compactness and (iv), we have
\begin{equation*}
  \inf_{w \in S_{\HH}} \langle D^2f(z_*)w , w \rangle =: \sigma>0,
\end{equation*}
where $S_{\HH}$ is the unit sphere in the subspace $\HH$ (with center $z_*$). Moreover, by continuity, there exists a neighborhood $U \subset \RR ^{2n}$ of $S_{\HH}$ such that
\begin{equation} \label{nondeg}
  \langle D^2f(z_*)w , w \rangle \geq \frac{\sigma}{2},
\end{equation}
for all $w \in U$.
%That in turn implies that a stronger version of the condition iv) holds, namely,
%\begin{itemize}
%  \item[iv)']$\langle D^2f(z_*)(z-z_*) , (z-z_*) \rangle \geq \sigma |z-z^*|^2$, for $z \neq z_*$ such that $z-z_* \in \HH$.
%\end{itemize}
%From this condition and the condition $iii)$ we obtain
%\begin{itemize}
%  \item[v)] $\langle D^2g(z_*)(z-z_*) , (z-z_*) \rangle \geq \frac{\sigma}{C} |z-z^*|^2$, for $z\neq z_*$ such that $z-z_* \in \HH$.
%\end{itemize}
Next, note that for $z \in \mathcal{M}$ sufficiently close to $z_*$, we have $\frac{z-z_*}{|z-z_*|} \in U$. Hence, \eqref{nondeg}, (i), (ii), (iii), \eqref{eq: D3f}, and \eqref{eq: D3g} imply that there exists a neighborhood $V \subset \mathcal{M}$ of $z_*$ such that if $z \in V$, then
\begin{equation*}
  f(z) \leq \langle D^2f(z_*)(z-z_*) , (z-z_*)\rangle,
\end{equation*}
and
\begin{equation*}
  g(z) \geq \frac{1}{4}\langle D^2g(z_*)(z-z_*) , (z-z_*)\rangle.
\end{equation*}
To conclude, note that (iii) implies 
\begin{equation*}
  f(z) \leq 4C g(z),
\end{equation*}
for $z \in V$.
\end{proof}

\noindent To complete the proof of Theorem \ref{thm: main} for convex $n$-gons, we verify conditions (i)-(iv) in Lemma \ref{cond}. Note that (i) holds as a result of the polygonal isoperimetric inequality. The remaining sections are devoted to checking (ii)-(iv).    

\subsection{Derivatives of $f$ and $g$}

In this section, we compute the derivatives of $f$ and $g$ at the point $z_*=(x_*;r_*)=(\frac{2\pi}{n},\frac{2\pi}{n},\ldots,\frac{2\pi}{n};1,1,\ldots,1)$. Note that $$\frac{\cos(\pi /n)}{2n} f(x;r)=\cos \frac{\pi}{n} \ \sum \limits _{i=1}^n  r_i^2 - \sum \limits _{i=1}^n r_i r_{i+1} \cos(x_i-\frac{\pi}{n}).$$ By a slight abuse of notation, we denote the right-hand side by f, i.e. from now on, $$f(x;r)= \cos \frac{\pi}{n} \ \sum \limits _{i=1}^n r_i^2 - \sum \limits _{i=1}^n r_i r_{i+1} \cos(x_i-\frac{\pi}{n}).$$ Note that our notation is periodic modulo $n$, therefore differentiation with respect to the $r_{n+k},x_{n+k}$ variables is the same as differentiation with respect to $r_k$ and $x_k$, respectively. By direct computation, 
\begin{equation}\label{eq: Df}
\begin{cases}
D_{x_i}f=\sin \frac{\pi}{n},\\
D_{r_i}f=0.
\end{cases}
\end{equation}

\begin{equation}\label{eq: D2_xx f}
\begin{cases}
D_{x_i x_i}f=\cos \frac{\pi}{n},\\
D_{x_i x_j}f=0,\ \mbox{for} \ i \neq j.
\end{cases}
\end{equation}

\begin{equation}\label{eq: D2_rr f}
\begin{cases}
D_{r_i r_i}f=2 \cos \frac{\pi}{n},\\
D_{r_i r_j}f=-\cos \frac{\pi}{n},\ \mbox{for} \ |i - j|=1.
\end{cases}
\end{equation}

\begin{equation}\label{eq: D2_xr f}
\begin{cases}
D_{x_i r_j}f= \sin \frac{\pi}{n},\ \mbox{for} \ j=i,i+1\\
D_{x_i r_j}f=0,\ \mbox{otherwise}.
\end{cases}
\end{equation}

Furthermore, 
\begin{equation}\label{eq: Dg}
\begin{cases}
D_{x_i}g= 2n \tan \frac{\pi}{n},\\
D_{r_i}g=0.
\end{cases}
\end{equation}

\begin{equation}\label{eq: D2_xx g}
\begin{cases}
D_{x_i x_i}g= 2 \cos ^2 \frac{\pi}{n}+2n \sin ^2 \frac{\pi}{n},\\
D_{x_i x_j}g=2 \cos ^2 \frac{\pi}{n},\ \mbox{for} \ j \neq i.
\end{cases}
\end{equation}

\begin{equation}\label{eq: D2_rr g}
\begin{cases}
D_{r_i r_i}g= 8 \sin ^2 \frac{\pi}{n}+4n \cos ^2 \frac{\pi}{n},\\
D_{r_i r_j}g= (8-4n) \sin ^2 \frac{\pi}{n}-2n \cos ^2 \frac{\pi}{n},\ \mbox{for} \  |j - i|=1,\\
D_{r_i r_j}g= 8 \sin ^2 \frac{\pi}{n}, \ \mbox{otherwise}.
\end{cases}
\end{equation}

\begin{equation}\label{eq: D^2_xr g}
\begin{cases}
D_{x_i r_j}g= 2 \sin \frac{2\pi}{n}+2n \sin ^2 \frac{\pi}{n}\tan \frac{\pi}{n},\ \mbox{for} \  j=i,i+1,\\
D_{x_i r_j}g= 2 \sin \frac{2\pi}{n}, \ \mbox{otherwise}.
\end{cases}
\end{equation}

\subsection{Condition (ii)}

By \eqref{eq: Df}, 
\begin{align*}
Df(x_*;r_*)(x-x_*;r-r_*)&=D_x f(x_*;r_*)(x-x_*)+D_r f(x_*;r_*)(r-r_*)\\
&=\sin \frac{\pi}{n} \sum \limits _{i=1}^{n}(x_i-(x_*)_i)=0,
\end{align*}
since $\sum \limits _{i=1}^{n} x_i=\sum \limits _{i=1}^{n} (x_*)_i= 2 \pi$. Similarly from \eqref{eq: Dg},
\begin{align*}
Dg(x_*;r_*)(x-x_*;r-r_*)&=D_x g(x_*;r_*)(x-x_*)+D_r g(x_*;r_*)(r-r_*)\\
&=2n \tan \frac{\pi}{n} \sum \limits _{i=1}^{n}(x_i-(x_*)_i)=0.
\end{align*}

\subsection{Conditions (iii) and (iv)} \label{lastsec}

Let $F:=D^2f(x_*;r_*)$, $G:=D^2g(x_*;r_*)$, and note that $F,G \in M_{2n \times 2n}(\RR)$ are square symmetric matrices. Consider the subspace of $\HH$ given by
\begin{equation}
  \HH _1:=\Bigg\{(x;r) :\ \sum \limits_{i=1}^n x_i=0,\ \sum \limits_{i=1}^n r_i=0 \Bigg \},
\end{equation}
and note that condition (iii) is precisely the matrix inequality
\begin{equation}\label{eq: F<G}
F \leq C G
\end{equation}
in $\HH _1$. To prove \eqref{eq: F<G}, we utilize a suitable coordinate transformation such that in the new coordinates system, the quadratic forms associated to the matrices $F$ and $G$ take a substantially simpler form. Note that
\begin{equation}\label{eq: F}
  F=\cos \frac{\pi}{n}\begin{pmatrix}
  I & B \\
  B^T & K
 \end{pmatrix},
\end{equation}
where $I$ is the $n \times n$ identity matrix,
\begin{equation}\label{eq: B}
B=\begin{pmatrix}
  \tan \frac{\pi}{n} &  \tan \frac{\pi}{n} & 0 & \cdots & 0\\
  0 & \tan \frac{\pi}{n} &  \tan \frac{\pi}{n} & \cdots & 0\\
  \vdots & 0& \ddots& \ddots& \vdots\\
  0 &\cdots &0 & \tan \frac{\pi}{n}& \tan \frac{\pi}{n}\\
  \tan \frac{\pi}{n} & 0 & \cdots & 0 & \tan \frac{\pi}{n}\\
 \end{pmatrix} _{n \times n},
 \end{equation}
and
\begin{equation}\label{eq: C}
K=\begin{pmatrix}
  2 &  -1 & 0 & \cdots &0 & -1\\
  -1 & 2 &  -1 & 0&\cdots & 0\\
  0& -1 & 2 & -1 & \ddots & \vdots\\
  \vdots &0 & \ddots & \ddots & \ddots & 0\\
  0 &\vdots& \ddots & -1& 2& -1\\
  -1 & 0 & \cdots & 0 & -1 & 2\\
 \end{pmatrix} _{n \times n}.
 \end{equation}
Let $M:=\begin{pmatrix}
  I & B \\
  B^T & K
 \end{pmatrix}$ and note that since $M$ is a constant multiple of the matrix $F$, inequality \eqref{eq: F<G} is equivalent to 
 \begin{equation}\label{eq: M<G}
   M \leq C G,
 \end{equation}
in the subspace $\HH _1$ for some constant $C >0$. Next, we consider the matrix $G$ and construct a matrix $G'$, which is of a simpler form than $G$ but is \emph{equivalent} to $G$ in the sense that the quadratic forms associated to $G$ and $G'$ are equal in the subspace $\HH _1$. By\eqref{eq: D2_xx g}, \eqref{eq: D2_rr g}, and \eqref{eq: D^2_xr g} we have that the quadratic form associated to $G$ has the form
 \begin{align}
   \langle G(x;r),(x;r) \rangle & =\left(2 \cos^2 \frac{\pi}{n}+2n\sin ^2 \frac{\pi}{n}\right) \sum \limits_{i=1}^n x_i^2+2 \cos^2 \frac{\pi}{n} \sum \limits_{i \neq j} x_i x_j +\\ \nonumber
   & \left(8 \sin ^2 \frac{\pi}{n}+4n \cos^2 \frac{\pi}{n}\right) \sum \limits_{i=1}^n r_i^2 + 8 \sin ^2 \frac{\pi}{n} \sum \limits_{|i-j|>1} r_i r_j + \\ \nonumber
   &\left((8-4n) \sin ^2 \frac{\pi}{n}-2n \cos^2 \frac{\pi}{n}\right) \sum \limits_{|i-j|=1} r_i r_j +\\ \nonumber
   & \left(4\sin\frac{2\pi}{n}+4n \tan \frac{\pi}{n} \sin ^2 \frac{\pi}{n}\right)\sum \limits_{j-i=0,1} x_i r_j+4\sin\frac{2\pi}{n} \sum \limits_{j-i\neq 0,1} x_i r_j\\ \nonumber
   &=2 \cos^2 \frac{\pi}{n} \left(\sum \limits_{i=1}^n x_i\right)^2+2n\sin ^2 \frac{\pi}{n} \sum \limits_{i=1}^n x_i^2+8 \sin ^2 \frac{\pi}{n} \left(\sum \limits_{i=1}^n r_i\right)^2 +\\ \nonumber
   & 4n \cos^2 \frac{\pi}{n} \sum \limits_{i=1}^n r_i^2-\left(4n \sin ^2 \frac{\pi}{n}+2n \cos^2 \frac{\pi}{n}\right) \sum \limits_{|i-j|=1} r_i r_j+ \\ \nonumber
   & 4n \tan \frac{\pi}{n} \sin ^2 \frac{\pi}{n} \sum \limits_{j-i=0,1} x_i r_j + 4\sin \frac{2\pi}{n} \sum \limits _{i=1}^{n} x_i \cdot \sum \limits _{j=1}^{n} r_j.
 \end{align}
Since $(x;r) \in \HH _1$, it follows that
\begin{align}
  \langle G(x;r),(x;r) \rangle &=2n\sin ^2 \frac{\pi}{n} \sum \limits_{i=1}^n x_i^2+4n \tan \frac{\pi}{n} \sin ^2 \frac{\pi}{n} \sum \limits_{j-i=0,1} x_i r_j + \\ \nonumber
  & 4n \cos^2 \frac{\pi}{n} \sum \limits_{i=1}^n r_i^2-\left(4n \sin ^2 \frac{\pi}{n}+2n \cos^2 \frac{\pi}{n}\right) \sum \limits_{|i-j|=1} r_i r_j \\ \nonumber
  &= \langle G'(x;r),(x;r) \rangle ,\\ \nonumber
\end{align}
where
\begin{equation}\label{eq: G'}
  G'=2n \sin ^2 \frac{\pi}{n}\begin{pmatrix}
  I & B \\
  B^T & D
 \end{pmatrix}.
\end{equation}
Here $I$ and $B$ are as before and $D$ is given by
\begin{equation}\label{eq: C}
D=\begin{pmatrix}
  \frac{2}{\tan ^2 \frac{\pi}{n}} &  -2-\frac{1}{\tan^2 \frac{\pi}{n}} &  & \cdots & & -2-\frac{1}{\tan^2 \frac{\pi}{n}}\\
  -2-\frac{1}{\tan^2 \frac{\pi}{n}} & \frac{2}{\tan ^2 \frac{\pi}{n}} &  -2-\frac{1}{\tan^2 \frac{\pi}{n}} & &\cdots & \\
  & -2-\frac{1}{\tan^2 \frac{\pi}{n}} & \frac{2}{\tan ^2 \frac{\pi}{n}} & -2-\frac{1}{\tan^2 \frac{\pi}{n}} & \ddots & \vdots\\
  \vdots & & \ddots & \ddots & \ddots & \\
   &\vdots& \ddots & -2-\frac{1}{\tan^2 \frac{\pi}{n}}& \frac{2}{\tan ^2 \frac{\pi}{n}}& -2-\frac{1}{\tan^2 \frac{\pi}{n}}\\
  -2-\frac{1}{\tan^2 \frac{\pi}{n}} &  & \cdots &  & -2-\frac{1}{\tan^2 \frac{\pi}{n}} & \frac{2}{\tan ^2 \frac{\pi}{n}}\\
 \end{pmatrix} _{n \times n}.
 \end{equation}
Let
\begin{equation}\label{eq: N}
  N=\begin{pmatrix}
  I & B \\
  B^T & D
 \end{pmatrix}
\end{equation}
and note that since $G = G'=2n \sin ^2 \frac{\pi}{n} N$ in $\HH_1$, if
 \begin{equation}\label{eq: M<N}
   M \leq C N,
 \end{equation}
 in $\HH _1$, then the desired inequality \eqref{eq: F<G} follows. To this aim, let
\begin{equation}\label{eq: Q}
  Q=\begin{pmatrix}
  I & -B \\
  0 & I
 \end{pmatrix},
\end{equation}
and note that $Q$ is non-degenerate: $\det Q =1$, and
\begin{equation}
  Q^T M Q=\begin{pmatrix}
  I & 0 \\
  0 & K-B^T B
 \end{pmatrix},
\end{equation}
\begin{equation}
  Q^T N Q=\begin{pmatrix}
  I & 0 \\
  0 & D-B^T B
 \end{pmatrix}.
 \end{equation}
 Therefore, consider new coordinates $(\zeta;\eta)$ such that
 \begin{equation}\label{eq: coordtransform}
 (x;r)=Q(\zeta;\eta).
 \end{equation}
 In this coordinate system, we have that
 \begin{equation*}
   \langle M(x;r),(x;r) \rangle = \langle Q^T M Q(\zeta;\eta),(\zeta;\eta) \rangle,
 \end{equation*}
 and
 \begin{equation*}
   \langle N(x;r),(x;r) \rangle = \langle Q^T N Q(\zeta;\eta),(\zeta;\eta) \rangle.
 \end{equation*}
 Hence, the inequality $M \leq C N$ is equivalent to $Q^T M Q \leq C Q^T N Q$. Since $x=\zeta-B \eta$ and $r=\eta$, it follows that under the above coordinate transformation, $\HH _1$ maps to the subspace given by $$\sum \limits _{i=1}^{n} \zeta _i = 2 \tan \frac{\pi}{n}\sum \limits _{i=1}^{n} \eta _i,$$ $$\sum \limits _{i=1}^{n} \eta _i=0,$$ or equivalently, $\sum \limits _{i=1}^{n} \zeta _i =  \sum \limits _{i=1}^{n} \eta _i=0$, which is in fact $\HH _1$ itself. This means that $\HH _1$ is invariant under the coordinate transformation given by \eqref{eq: coordtransform}. Therefore, it remains to prove 
 \begin{equation}\label{eq: QM<QN}
   Q^T M Q \leq C Q^T N Q
 \end{equation}
in $\HH _1$. 

\noindent Next, we turn our attention to $K-B^T B$ and $D-B^T B$. Plugging in the formulas for the matrices $K,B,D$, it follows that
 \begin{equation}\label{eq: C_B^T B}
K-B^T B=
\left(\begin{smallmatrix}
  2-2\tan^2 \frac{\pi}{n} &  -1-\tan ^2 \frac{\pi}{n} & 0 & \cdots &0 & -1-\tan ^2 \frac{\pi}{n}\\
  -1-\tan ^2 \frac{\pi}{n} & 2-2\tan^2 \frac{\pi}{n} &  -1-\tan ^2 \frac{\pi}{n} & 0&\cdots & 0\\
  0& -1-\tan ^2 \frac{\pi}{n} & 2-2\tan^2 \frac{\pi}{n} & -1-\tan ^2 \frac{\pi}{n} & \ddots & \vdots\\
  \vdots &0 & \ddots & \ddots & \ddots & 0\\
  0 &\vdots& \ddots & -1-\tan ^2 \frac{\pi}{n}& 2-2\tan^2 \frac{\pi}{n}& -1-\tan ^2 \frac{\pi}{n}\\
  -1-\tan ^2 \frac{\pi}{n} & 0 & \cdots & 0 & -1-\tan ^2 \frac{\pi}{n} & 2-2\tan^2 \frac{\pi}{n}\\
 \end{smallmatrix} \right)_{n \times n},
 \end{equation}
 and\\
 \\
 \begin{equation}\label{eq: D_B^T B}
D-B^T B=
\end{equation}
%\begin{equation}\label{eq: D_B^T B}
$$
\scalefont{0.4}\left(\begin{smallmatrix}
  \frac{2}{\tan ^2 \frac{\pi}{n}}-2\tan^2 \frac{\pi}{n} &  -2-\frac{1}{\tan^2 \frac{\pi}{n}}-\tan ^2 \frac{\pi}{n} & & \cdots & & -1-\tan ^2 \frac{\pi}{n}\\
  -2-\frac{1}{\tan^2 \frac{\pi}{n}}-\tan ^2 \frac{\pi}{n} & \frac{2}{\tan ^2 \frac{\pi}{n}}-2\tan^2 \frac{\pi}{n} &  -2-\frac{1}{\tan^2 \frac{\pi}{n}}-\tan ^2 \frac{\pi}{n} & &\cdots & \\
  & -2-\frac{1}{\tan^2 \frac{\pi}{n}}-\tan ^2 \frac{\pi}{n} & \frac{2}{\tan ^2 \frac{\pi}{n}}-2\tan^2 \frac{\pi}{n} & -2-\frac{1}{\tan^2 \frac{\pi}{n}}-\tan ^2 \frac{\pi}{n} & \ddots & \vdots\\
  \vdots & & \ddots & \ddots & \ddots & \\
   &\vdots& \ddots & -2-\frac{1}{\tan^2 \frac{\pi}{n}}-\tan ^2 \frac{\pi}{n}& \frac{2}{\tan ^2 \frac{\pi}{n}}-2\tan^2 \frac{\pi}{n}& -2-\frac{1}{\tan^2 \frac{\pi}{n}}-\tan ^2 \frac{\pi}{n} \\
  -2-\frac{1}{\tan^2 \frac{\pi}{n}}-\tan ^2 \frac{\pi}{n} & & \cdots &  & -2-\frac{1}{\tan^2 \frac{\pi}{n}}-\tan ^2 \frac{\pi}{n} & \frac{2}{\tan ^2 \frac{\pi}{n}}-2\tan^2 \frac{\pi}{n}\\
 \end{smallmatrix}\right)_{n \times n}.
 $$
 %\end{equation} 
Furthermore, note that
 \begin{equation}\label{eq: C-B^T B in terms of H}
   K-B^T B = \frac{1}{\cos^2 \frac{\pi}{n}} H 
 \end{equation}
 and
 \begin{equation}\label{eq: D-B^T B in terms of C-B^T B and E}
   D-B^T B = \frac{1}{\sin^2 \frac{\pi}{n} \cos ^2 \frac{\pi}{n}} H,
 \end{equation}
 where
 \begin{equation}\label{eq: H}
H=\begin{pmatrix}
  2\cos \frac{2\pi}{n} &  -1 & 0 & \cdots &0 & -1\\
  -1 & 2\cos \frac{2\pi}{n} &  -1 & 0&\cdots & 0\\
  0& -1 & 2\cos \frac{2\pi}{n} & -1 & \ddots & \vdots\\
  \vdots &0 & \ddots & \ddots & \ddots & 0\\
  0 &\vdots& \ddots & -1& 2\cos \frac{2\pi}{n}& -1\\
  -1 & 0 & \cdots & 0 & -1 & 2\cos \frac{2\pi}{n}\\
 \end{pmatrix} _{n \times n}.
 \end{equation}
Let $U:=Q^T M Q,\ V:=Q^T N Q$. In order to prove \eqref{eq: QM<QN}, we will need to compute the eigenvalues and eigenvectors of $U$ and $V$. This will be achieved via circulant matrix theory.\\ 

\noindent \textbf{Eigenvalues and eigenvectors of $U$.}

\noindent Since $U=\begin{pmatrix}
  I & 0 \\
  0 & K-B^T B
 \end{pmatrix}$, the eigenvalues of $U$ are the eigenvalues of $I$ and the eigenvalues of $K-B^T B$. The only eigenvalue of $I$ is $1$, and the matrix $K-B^T B$ is circulant; hence, we utilize \eqref{eq: eigs_of_circ_matr} to deduce that the eigenvalues of $K-B^T B$, say $\lambda_k$, are given by
 \begin{equation}\label{eq: eig(C-B^T B)}
   \lambda _k= \frac{2\cos \frac{2\pi}{n}}{\cos ^2 \frac{\pi}{n}}-\frac{1}{\cos ^2 \frac{\pi}{n}}(\omega _k+\omega _k^{n-1})=\frac{4\sin \frac{\pi(k-1)}{n}\sin \frac{\pi(k+1)}{n}}{\cos^2 \frac{\pi}{n}},
 \end{equation}
for $k=0,1,\ldots,n-1$. Next, denote the standard basis in $\RR^n$ by $e_1,\ldots, e_n$, and let $v_0,\ldots,v_{n-1} \in \RR ^{n}$ be the vectors given by Proposition \ref{prp: eigs_real_sym_circ_matr}. Recall that $v_k$ is an eigenvector corresponding to the eigenvalue $\lambda _{\lceil \frac{k}{2} \rceil}$ and define $f_k:=(e_k;0,\ldots,0) \in \RR ^{2n}$, for $k=1,2,\ldots, n$ and $f_k=(0,\ldots,0;v_{k-n-1}) \in \RR ^{2n}$, for $k=n+1,\ldots,2n$. Evidently, the vectors $f_i$ form an orthogonal basis in $\RR ^{2n}$ and are eigenvectors of the matrix $U$. Note that the vectors $f_1,f_2,\ldots,f_n$ are the eigenvectors corresponding to the eigenvalue $1$ and the eigenvector $f_k$ corresponds to $\lambda_{\lceil \frac{k-n-1}{2} \rceil}$, for $k=n+1,n+2,\ldots, 2n$.
Now pick any $(\zeta;\eta) \in \RR ^{2n}$. Then there exist unique coefficients $\alpha _i \in \RR$ such that
\begin{equation}\label{eq: expansion in v-s}
  (\zeta;\eta)=\sum \limits _{k=1}^{2n} \alpha _k f_k.
\end{equation}

\noindent Thus, 
\begin{align}\label{eq: Uform in v-s}
  \langle U(\zeta;\eta),(\zeta;\eta) \rangle &= \sum \limits_{k,k'=1}^{2n} \alpha _k \alpha _{k'} \langle U f_k, f_{k'} \rangle=\sum \limits_{k=1}^{n} \alpha _k^2 |f_k|^2+\sum \limits_{k=n+1}^{2n} \alpha _k^2 \lambda _{\lceil \frac{k-n-1}{2} \rceil} |f_k|^2\\ \nonumber
  &=\sum \limits_{k=1}^{n} \alpha _k^2+n \alpha_{n+1}^2 \lambda _0+\sum \limits_{k=n+2}^{2n} \alpha _k^2 \lambda _{\lceil \frac{k-n-1}{2} \rceil}|f_k|^2,
\end{align}
and since $f_{n+1}=(0,0,\ldots,0;1,1,\ldots,1)$, it follows that 
\begin{equation}\label{eq: alpha n+1}
  \alpha _{n+1}=\frac{\langle(\zeta;\eta),f_{n+1}\rangle}{|f_{n+1}|^2}=\frac{\sum \limits _{i=1}^{n} \eta _i}{n}.
\end{equation}
Now, if $(\zeta;\eta) \in \HH _1$, then $\sum \limits _{i=1}^{n} \zeta _i=\sum \limits_{i=1}^{n} \eta _i=0$; therefore, $\alpha _{n+1}=0$. Hence,
\begin{align}\label{eq: Uform in v-s final}
  \langle U(\zeta;\eta),(\zeta;\eta) \rangle =\sum \limits_{k=1}^{n} \alpha _k^2 +\sum \limits_{k=n+2}^{2n} \alpha _k^2 \lambda _{\lceil \frac{k-n-1}{2} \rceil}|f_k|^2.
\end{align}

\noindent \textbf{Eigenvalues and eigenvectors of $V$.}

\noindent Since $V$ has exactly the same form as $U$, our analysis above is valid for $V$. Thus, if $(\zeta;\eta)=\sum \limits _{k=1}^{2n} \alpha _k f_k$, then
\begin{align}\label{eq: Vform in v-s}
  \langle V(\zeta;\eta),(\zeta;\eta) \rangle &= \sum \limits_{k,k'=1}^{2n} \alpha _k \alpha _{k'} \langle V v_k, v_{k'} \rangle=\sum \limits_{k=1}^{n} \alpha _k^2 |f_k|^2+\sum \limits_{k=n+1}^{2n} \alpha _k^2 \mu _{\lceil \frac{k-n-1}{2} \rceil} |f_k|^2\\ \nonumber
  &=\sum \limits_{k=1}^{n} \alpha _k^2+n \alpha_{n+1}^2 \mu _0+\sum \limits_{k=n+2}^{2n} \alpha _k^2 \mu _{\lceil \frac{k-n-1}{2} \rceil}|f_k|^2,
\end{align}
where $\mu _0,\mu _1,\ldots,\mu _{n-1}$ are the eigenvalues of $D-B^T B$, and as before, $\alpha _{n+1}=0$. Next, since $D-B^T B=\frac{1}{\sin ^2 \frac{\pi}{n}}(K-B^T B)$, it follows that
\begin{align}\label{eq: mu-s in lambda-s }
   \mu _k=\frac{\lambda _k}{\sin ^2 \frac{\pi}{n}},
 \end{align}
for $k=0,1,\ldots,n-1$.
Hence,
\begin{align}\label{eq: Vform in v-s final}
  \langle V(\zeta;\eta),(\zeta;\eta) \rangle =\sum \limits_{k=1}^{n} \alpha _k^2 +\frac{1}{\sin ^2 \frac{\pi}{n}}\sum \limits_{k=n+2}^{2n} \alpha _k^2 \lambda _{\lceil \frac{k-n-1}{2} \rceil}|f_k|^2,
\end{align}
which together with \eqref{eq: Uform in v-s final} and the fact that $\lambda _k \geq 0$ for $1\leq k \leq n-1$, implies
\begin{equation*}
  \langle U(\zeta;\eta),(\zeta;\eta) \rangle \leq \langle V(\zeta;\eta),(\zeta;\eta) \rangle.
\end{equation*}

\noindent Thus, condition (iii) of Lemma \ref{cond} holds. It remains to verify (iv), which requires
$$\langle D^2f(z_*) w,w \rangle >0,$$ for $w \in \HH$ and $w\neq 0.$ Since $D^2f(z_*)=F=\cos \frac{\pi}{n} M$, this is equivalent to
$$\langle M w,w \rangle >0,$$ for $w \in \HH$ and $w\neq 0$, which is in turn equivalent to
\begin{itemize}
  \item[(iv)'] $\langle U (\zeta;\eta),(\zeta;\eta) \rangle >0$, for $(\zeta;\eta) \in \tilde{\HH}$ and $(\zeta;\eta)\neq 0$,
\end{itemize}
where $\tilde{\HH}$ is the space that the space $\HH$ is mapped to under the coordinate transformation \eqref{eq: coordtransform}. In order to identify $\tilde{\HH}$, note that since $x_k=\zeta _k-\tan \frac{\pi}{n}(\eta _k+\eta _{k+1})$ and $r_k=\eta _k$, 
\begin{align}
  \sum \limits _{k=1}^{n} x_k=0 &\quad \mbox{iff} \quad \sum \limits _{k=1}^{n} \zeta _k=2 \tan \frac{\pi}{n} \sum \limits _{k=1}^{n} \eta_k,\\ \nonumber
  \sum \limits _{k=1}^{n} r_k=0 &\quad \mbox{iff} \quad \sum \limits _{k=1}^{n} \eta _k=0.
\end{align}
Furthermore, the condition
\begin{equation*}
  -\frac{1}{2 \sin \frac{\pi}{n}}\sum \limits _{k=1}^{n} x_k \cos \frac{\pi(2k-1)}{n}+\sum \limits _{k=1}^{n} r_k \cos \frac{2 \pi (k-1)}{n}=0
\end{equation*}
transforms into
\begin{equation*}
  -\frac{1}{2 \sin \frac{\pi}{n}}\sum \limits _{k=1}^{n} \left(\zeta _k-\tan \frac{\pi}{n}(\eta_k+\eta _{k+1})\right) \cos \frac{\pi(2k-1)}{n}+\sum \limits _{k=1}^{n} \eta_k \cos \frac{2 \pi (k-1)}{n}=0,
\end{equation*}
which after simplification becomes
\begin{equation}
  -\frac{1}{2 \sin \frac{\pi}{n}}\sum \limits _{k=1}^{n} \zeta _k \cos \frac{\pi(2k-1)}{n}+2\sum \limits _{k=1}^{n} \eta_k \cos \frac{2 \pi (k-1)}{n}=0.
\end{equation}
Similarly, the condition
\begin{equation*}
  -\frac{1}{2 \sin \frac{\pi}{n}}\sum \limits _{k=1}^{n} x_k \sin \frac{\pi(2k-1)}{n}+\sum \limits _{k=1}^{n} r_k \sin \frac{2 \pi (k-1)}{n}=0
\end{equation*}
transforms into
\begin{equation}
  -\frac{1}{2 \sin \frac{\pi}{n}}\sum \limits _{k=1}^{n} \zeta _k \sin \frac{\pi(2k-1)}{n}+2\sum \limits _{k=1}^{n} \eta_k \sin \frac{2 \pi (k-1)}{n}=0.
\end{equation}
Therefore, the space $\tilde{\HH}$ is given by
\begin{align*}
  \sum \limits_{k=1}^{n} \zeta_k&=2 \tan \frac{\pi}{n}\sum \limits_{k=1}^{n} \eta_k;\\
  \sum \limits_{k=1}^{n} \eta_k&=0;\\
  -\frac{1}{2 \sin \frac{\pi}{n}}\sum \limits _{k=1}^{n} \zeta _k \cos \frac{\pi(2k-1)}{n}+2\sum \limits _{k=1}^{n} \eta_k \cos \frac{2 \pi (k-1)}{n}&=0;\\
  -\frac{1}{2 \sin \frac{\pi}{n}}\sum \limits _{k=1}^{n} \zeta _k \sin \frac{\pi(2k-1)}{n}+2\sum \limits _{k=1}^{n} \eta_k \sin \frac{2 \pi (k-1)}{n}&=0;
\end{align*}
or equivalently by
\begin{align}\label{eq: tilde H}
  \sum \limits_{k=1}^{n} \zeta_k&=0;\\ \nonumber
  \sum \limits_{k=1}^{n} \eta_k&=0;\\ \nonumber
  -\frac{1}{2 \sin \frac{\pi}{n}}\sum \limits _{k=1}^{n} \zeta _k \cos \frac{\pi(2k-1)}{n}+2\sum \limits _{k=1}^{n} \eta_k \cos \frac{2 \pi (k-1)}{n}&=0;\\ \nonumber
  -\frac{1}{2 \sin \frac{\pi}{n}}\sum \limits _{k=1}^{n} \zeta _k \sin \frac{\pi(2k-1)}{n}+2\sum \limits _{k=1}^{n} \eta_k \sin \frac{2 \pi (k-1)}{n}&=0.
\end{align}
Note that \eqref{eq: tilde H} is simply the condition that the vector $(\zeta;\eta)$ is orthogonal to the vectors 
\begin{align*}
&w_1=\sum \limits _{k=1}^{n} f_k,\\
&w_2 =f_{n+1},\\
&w_3=-\frac{1}{2 \sin \frac{\pi}{n}}\sum \limits _{k=1}^{n} f_k \cos \frac{\pi(2k-1)}{n}+2 f_{n+2},\\
&w_4=-\frac{1}{2 \sin \frac{\pi}{n}}\sum \limits _{k=1}^{n} f_k \sin \frac{\pi(2k-1)}{n}+2 f_{n+3}.
\end{align*}
Moreover,  
\begin{align*}
  \langle (\zeta;\eta),w_1\rangle &=\sum \limits_{k=1}^{n} \alpha_k,\\
  \langle (\zeta;\eta),w_2\rangle &=n\alpha_{n+1},\\
  \langle (\zeta;\eta),w_3\rangle &=n\alpha_{n+2}-\frac{1}{2 \sin \frac{\pi}{n}}\sum \limits_{k=1}^{n} \alpha_k \cos \frac{\pi(2k-1)}{n},\\
  \langle (\zeta;\eta),w_3\rangle &=n\alpha_{n+3}-\frac{1}{2 \sin \frac{\pi}{n}}\sum \limits_{k=1}^{n} \alpha_k \sin \frac{\pi(2k-1)}{n},
\end{align*}
where we utilized the fact that $|f_{n+2}|^2, |f_{n+3}|^2=\frac{n}{2}$. Thus, we see that $(\zeta;\eta) \in \tilde{\HH}$ is identified by the equations 
\begin{align*}
  \sum \limits_{k=1}^{n} \alpha_k&=0,\\
  \alpha_{n+1}&=0,\\
  \alpha_{n+2}&=\frac{1}{2n \sin \frac{\pi}{n}}\sum \limits_{k=1}^{n} \alpha_k \cos \frac{\pi(2k-1)}{n},\\
  \alpha_{n+3}&=\frac{1}{2n \sin \frac{\pi}{n}}\sum \limits_{k=1}^{n} \alpha_k \sin \frac{\pi(2k-1)}{n},
\end{align*}
and an application of Cauchy-Schwarz yields 
\begin{align*}
  \alpha_{n+2}^2 \leq \frac{\sum \limits_{k=1}^{n}  \cos ^2 \frac{\pi(2k-1)}{n}}{4n^2 \sin^2 \frac{\pi}{n}} \sum \limits_{k=1}^{n} \alpha_k^2,\\
  \alpha_{n+3}^2 \leq \frac{\sum \limits_{k=1}^{n}  \sin ^2 \frac{\pi(2k-1)}{n}}{4n^2 \sin^2 \frac{\pi}{n}} \sum \limits_{k=1}^{n} \alpha_k^2;
\end{align*}
hence,
\begin{align}
  \sum \limits_{k=1}^{n+3} \alpha _k^2 \leq \Big(1+\frac{1}{4n \sin^2 \frac{\pi}{n}} \Big) \sum \limits_{k=1}^{n} \alpha _k^2,
\end{align}
and by letting $\tilde c_n:=\Big(1+\frac{1}{4n \sin^2 \frac{\pi}{n}} \Big)^{-1}$, it follows that 
\begin{align*}
  \langle U(\zeta;\eta),(\zeta;\eta)\rangle &\ge\sum \limits_{k=1}^{n} \alpha _k^2+\sum \limits_{k=n+4}^{2n} \alpha _k^2 \lambda_{\lceil \frac{k-n-1}{2}\rceil} |f_k|^2 \\
  &\geq \tilde c_n \sum \limits_{k=1}^{n+3} \alpha _k^2+\sum \limits_{k=n+4}^{2n} \alpha _k^2 \lambda_{\lceil \frac{k-n-1}{2} \rceil} |f_k|^2\\
  &\geq c_n \sum \limits_{k=1}^{2n} \alpha _k^2,
\end{align*}
where $c_n=\min \{\tilde c_n,\{\lambda_{\lceil \frac{k-n-1}{2}\rceil} |f_k|^2\}_{k=n+4}^{2n}\}>0$. Therefore, $\langle U(\zeta;\eta),(\zeta;\eta)\rangle >0$, whenever $(\zeta;\eta) \in \tilde{\HH}$ and $(\zeta;\eta)\neq 0$. Thus, condition (iv) is verified and we conclude the proof the theorem for convex $n$-gons.

\begin{remark} \label{sharp}
Since we showed that $f$ and $g$ and their gradients vanish at the minimizer and the Hessian of $f$ is non-zero and bounds the Hessian of $g$ from below (at the minimzer), it follows that the estimate in Theorem \ref{thm: main} is sharp in the exponents.  
\end{remark}

\begin{remark} \label{genth}
We note that any function $f$ satisfying the conditions of Lemma \ref{cond} serves as a lower bound on $g$, which in our functional formulation represents the polygonal isoperimetric deficit. In particular, our method may be useful in obtaining new geometrically significant lower bounds on the deficit.  
\end{remark}

\subsection{The general case} \label{gen}

In this section we reduce the problem to the convex case by utilizing the generalization of the Erd{\H o}s-Nagy theorem to non-simple polygons given by Toussaint \cite{EN}. Recall from the introduction that $$\tau(P):=\sum_{i=1}^{k_n} \tau_i(P),$$ where $\tau_i(P)$ is the area increase at the $i$-th step given by Toussaint's process of selecting flips. Since the perimeter is invariant at each step, we have 
\begin{align*}
\delta(P)&=L^2(P)-c_n|P|\\ 
&=L^2(P_1)-c_n(|P_1|-\tau_1(P))\\
&=\delta(P_1)+c_n \tau_1(P)\\
&=\cdots\\
&=\delta(P_{k_n})+c_n \tau(P),
\end{align*}

\noindent where $P_{k_n}$ is a convex $n$-gon; thus, by what we proved in the previous sections it follows that $$\sigma_s^2(P_{k_n}) \le C(n) \delta(P_{k_n}).$$ But $\sigma_s^2(P_{k_n})=\sigma_s^2(P)$ since the flipping process preserves the lengths of the sides; combining this with \eqref{fin} yields
\begin{align*} 
8n^2  \sin ^2 \frac{\pi}{n} \ \sigma_r ^2(P) &\le \delta(P)+n^2 \sigma_s ^2(P)\\
&\le \delta(P)+n^2 C(n) (\delta(P)-c_n\tau(P)).    
\end{align*} 
Thus, $$\tau(P)+v(P) \lesssim \delta(P),$$ and this finishes the proof for non-simple polygons. 

\noindent The proof of Corolary \ref{thm: main2} follows from the observation that if the polygons are simple, then the interior of $P_1$ contains the interior of $P$ and so $$|P_1\Delta P|=|P_1\setminus P|=|P_1|-|P|;$$ thus, the total area increase is given by $$|P \Delta P_1|+|P_1 \Delta P_2|+\cdots+|P_{k_n-1} \Delta P_{k_n}|,$$ and the triangle inequality in $L^1$ implies the result (of course, we take $P_c:=P_{k_n}$ in this case). 
  
%\subsection{Calculation of the constant} 
%Recall,
%$$f=nS-c_nF,$$
%$$g=L^2-c_nF$$
%and if we prove $f \le C(n) g$, then we get that $$n^2 \sigma_s^2 \le C(n) \delta$$ so (3.1) and (3.2) imply $$8n^2sin^2(\pi/n) (\sigma_r^2+\sigma_s^2) \le (\delta + n^2\sigma_s^2)+8n^2sin^2(\pi/n) \sigma_s^2 \le (1+C(n)+8sin^2(\pi/n))\delta.$$ Thus, it suffices to trace $C(n)$. Let's consider the case when the deficit is small. In Section 3.5 we redefine $f$ up to the constant $cos(\pi/n)/(2n)$. So if we prove $f \le C(n) g$, then this means that the real constant of proportionality is $$C(n)(\frac{2n}{cos(\pi/n)}).$$   
%
%Now (3.3) and the proof of Lemma 3.1 imply that if $F \le CG$, then we may take $C(n)=4C$. Next, (3.37) implies $$F=cos(\pi/n)M,$$ and $$G=2nsin^2(\pi/n)N,$$ so if we prove $M\le CN$, then $$F \le C \frac{cos(\pi/n)}{2nsin^2(\pi/n)}G.$$ But $M \le N$, so we can take $C=1$. Thus, we may take $$C(n)=4\frac{cos(\pi/n)}{2nsin^2(\pi/n)}$$ and so the real constant of proportionality is $$(4\frac{cos(\pi/n)}{2nsin^2(\pi/n)}) (\frac{2n}{cos(\pi/n)})=4/sin^2(\pi/n).$$     
%
%
 
\bibliographystyle{alpha}

\bibliography{ngonref}

\signei

\signln

\end{document}